\numberwithin{equation}{section}
\newcommand{\calA}{\mathcal{A}}
\newcommand{\calG}{\mathcal{G}}
\newcommand{\calI}{\mathcal{I}}
\newcommand{\calL}{\mathcal{L}}
\newcommand{\mA}{\mathbb{A}}
\newcommand{\mC}{\mathbb{C}}
\newcommand{\mD}{\mathbb{D}}
\newcommand{\mF}{\mathbb{F}}
\newcommand{\mN}{\mathbb{N}}
\newcommand{\mR}{\mathbb{R}}
\newcommand{\mS}{\mathbb{S}}
\newcommand{\mT}{\mathbb{T}}
\newcommand{\mZ}{\mathbb{Z}}
\newcommand{\inv}{{\textrm{inv }}}
\newtheorem{theorem}{Theorem}[section]
\newtheorem{proposition}[theorem]{Proposition}
\theoremstyle{definition}
\newtheorem{remark}[theorem]{Remark}
\theoremstyle{definition}
\newtheorem{definition}[theorem]{Definition}
\theoremstyle{definition}
\begin{document}

\keywords{$\nu$-metric, robust control, Hardy algebra}

\subjclass{Primary 93B36; Secondary 93D15, 46J15}

\title[On two natural extensions of the $\nu$-metric]{On two natural extensions of Vinnicombe's metric: their noncoincidence 
yet equivalence on stabilizable plants over $\calA_+$}

\author{Rudolf Rupp}
\thanks{Part of this research was done while the first author enjoyed a sabbatical. He wants to thank the University of Applied Science N\"urnberg for this.}
\address{Fakult\"at Allgemeinwissenschaften,
  Georg-Simon-Ohm Hochschule N\"urnberg,
  Ke\ss lerplatz 12, D-90489 N\"urnberg,
  Germany.}
\email{rudolf.rupp@ohm-hochschule.de}

\author{Amol Sasane}
\address{Department of Mathematics, London School of Economics,
    Houghton Street, London WC2A 2AE, United Kingdom.}
\email{a.j.sasane@lse.ac.uk}

\begin{abstract}
  Let  $\calA_+$ be the ring of Laplace transforms of complex Borel measures on $\mR$ 
  with support in $[0,+\infty)$ which do not have a singular nonatomic part. We compare 
  the $\nu$-metric $d_{\calA_+}$ for stabilizable plants over $\calA_+$ given in \cite{BalSas} 
  with yet another metric $d_{H^\infty}|_{\calA_+}$, namely the one  induced by the metric $d_{H^\infty}$ for the set of stabilizable plants over $H^\infty$ given in 
  \cite{Sas}. Both $d_{\calA_+}$ and $d_{H^\infty}$ coincide with the classical 
Vinnicombe metric defined for rational transfer functions, but we show here 
by means of an example that these two possible extensions of the 
  classical $\nu$-metric for plants over $\calA_+$ do not coincide on the set of stabilizable plants over $\calA_+$. We also prove that they nevertheless 
give rise to the same topology on stabilizable plants over $\calA_+$, which in turn coincides with the gap metric topology. 
\end{abstract}

\maketitle

\section{Introduction}

We recall the general {\em stabilization problem} in control theory.
Suppose that $R$ is a commutative integral domain with identity
(thought of as the class of stable transfer functions) and let
$\mF(R)$ denote the field of fractions of $R$. The stabilization
problem is: Given $P\in \mF(R)$ (an unstable plant transfer function), 
find $C \in \mF(R)$ (a stabilizing controller
  transfer function), such that 
$$
H(P,C):= \left[\begin{array}{cc} P \\ 1 \end{array} \right]
(1-CP)^{-1} \left[\begin{array}{cc} -C & 1 \end{array} \right]
 \in R^{2 \times 2} \textrm{ (is stable).}
$$
In the {\em robust stabilization problem}, one goes a step
further.  One knows that the plant is just an approximation of
reality, and so one would really like the controller $C$ to not only
stabilize the {\em nominal} plant $P_0$, but also all sufficiently
close plants $P$ to $P_0$.  The question of what one means by
``closeness'' of plants thus arises naturally. So one needs a  
function $d$ defined on pairs of stabilizable plants such that 
$d$ is a metric on the set of all stabilizable plants, $d$ is amenable to computation, and 
stabilizability is a robust property of the plant with respect
  to this metric (that is, whenever a plant $P_0$ is stabilized by 
  a controller $C$, then there is a small enough neighbourhood of the plant $P_0$  
  consisting of plants which are stabilized by the same controller $C$).  
Such a desirable metric was introduced by Glenn Vinnicombe in
\cite{Vin} and is called the $\nu$-{\em metric}. In that paper,
essentially $R$ was taken to be the rational functions without poles
in the closed unit disk. It was shown in \cite{Vin} that the $\nu$-metric is indeed a metric
on the set of stabilizable plants, and that  stabilizability 
is a robust property of the plant $P$.

The problem of what happens when $R$ is some other ring of stable
transfer functions of infinite-dimensional systems was left open in
\cite{Vin}. This problem of extending the $\nu$-metric from the
rational case to nonrational transfer function classes of infinite-dimensional
systems was addressed in \cite{BalSas} where the approach taken was abstract. 
However when we focus on the set of stabilizable plants over $\calA_+$, 
there are two possible natural extensions of the Vinnicombe metric for rational plants. 
We recall these two possibilities from \cite{BalSas} and \cite{Sas} in the following section. 
The question of whether these two metrics coincide on the full set of stabilizable plants over $\calA_+$ is a natural one. We prove that 
this is not the case by means of an example in Section~\ref{section_example}. Notwithstanding this noncoincidence, we show that 
these two metrics do induce the same topology in Section~\ref{section_equivalence}.

\section{Recap of the two $\nu$-metrics for unstable plants over $\calA_+$}
\label{section_abstract_nu_metric}

\noindent We recall the following standard definitions from the
factorization approach to control theory. 

\subsection{The notation $\mF(\calA_+)$:}\label{subsec1}  
$\mF(\calA_+)$ denotes the field of fractions of $\calA_+$.

\subsection{Normalized coprime factorization:} For 
 a $P \in \mF(\calA_+)$, a factorization $P=N/D$,
where $N,D\in \calA_+$, is called a {\em coprime 
factorization of} $P$ if there exist $X, Y\in 
\calA_+$ such that $ X N + Y D=1$.  If moreover 
$$ 
\overline{N(iy)} N(iy) +\overline{D(iy)}D(iy) =1 \quad (y\in \mR),
$$
then the coprime factorization is
referred to as a {\em normalized} coprime factorization of $P$. 
 Since we are dealing with functions rather than with matrices, it is not necessary to distinguish between left and right coprime factorizations.

\subsection{The notation $G, \widetilde{G}$, $K$, $\widetilde{K}$:}
\label{subsec5}
Given $P \in \mF(\calA_+)$ with normalized factorization $P=N /D$, 
we introduce the following matrices with entries from
$\calA_+$:
$$
G=\left[ \begin{array}{cc} N \\ D \end{array} \right] \quad
\textrm{and} \quad  
\widetilde{G}=\left[ \begin{array}{cc} -D & N \end{array} \right] .
$$
Similarly, given an element $C \in \mF(\calA_+)$ with normalized coprime
factorization $C=X/Y$, we introduce the following matrices with
entries from $\calA_+$:
$$
K=\left[ \begin{array}{cc} Y \\ X \end{array} \right] \quad
\textrm{and} \quad \widetilde{K}=\left[ \begin{array}{cc} -X & Y\end{array} \right] .
$$

\subsection{The notation $\mS(\calA_+)$:}\label{subsec6} 
$\mS(\calA_+)$ denotes the set of all $P\in \mF(\calA_+)$ 
that possess a normalized coprime factorization. 

It follows from the proof of \cite[Lemma~6.5.6.(e)]{Mik} and
\cite[Theorem~5.2.8]{Mik} that whenever $p\in \mF(\calA_+)$ has a
coprime factorization over $\calA_+$, it also has a {\em normalized}
coprime factorization over $\calA_+$. However, it is known that not 
every element in $\mF(\calA_+)$ possesses a coprime factorization; see for example \cite{Log}.

We now recall the definition of the two metrics $d_\nu$ on $\mS(\calA_+)$.

\subsection{The metric $d_{\calA_+}$}

Let $\mC_{\geq 0}:=\{s\in \mC\;|\; \textrm{Re}(s)\geq 0\}$ and let
$\calA^+$ denote the Banach algebra

$
\!\!\!\!\calA^+=\left\{ s (\in \mC_{\geq 0}) \mapsto \widehat{f_a}(s)
  +\displaystyle \sum_{k=0}^\infty f_k e^{- s t_k} \; \bigg| \;
\begin{array}{ll}
f_a \in L^{1}(0,\infty), \;(f_k)_{k\geq 0} \in \ell^{1},\\
0=t_0 <t_1 ,t_2 , t_3, \dots
\end{array} \right\}
$

\noindent equipped with pointwise operations and the norm:

$
\|F\|=\|f_a\|_{\scriptscriptstyle L^{1}} +
\|(f_k)_{k\geq 0}\|_{\scriptscriptstyle \ell^1},
\;\; F(s)=\widehat{f_a}(s) +\displaystyle\sum_{k=0}^\infty f_k e^{-st_k}\;\;(s\in \mC_{\geq 0}).
$

\noindent Here $\widehat{f_a}$ denotes the {\em Laplace transform of}
$f_a$, given by
$$
\widehat{f_a}(s)=\displaystyle \int_0^\infty e^{-st} f_a(t)
dt, \quad s \in \mC_{\geq 0}.
$$
Similarly, define the Banach algebra $\calA$ as follows:

$
\!\!\!\!\!\!\calA\!=\!\left\{ iy (\in i\mR) \mapsto \widehat{f_a}(iy)
  +\!\!\!\displaystyle\sum_{k=-\infty}^\infty f_k e^{- iy t_k} \bigg|
\begin{array}{ll}
f_a \in L^{1}(\mR), \;(f_k)_{k\in \mZ } \in \ell^{1},\\
\dots, t_{-2}, t_{-1}<\!0\!=\!t_0\! <t_1 ,t_2 ,  \dots
\end{array} \!\!\!\right\}
$

\noindent equipped with pointwise operations and the norm:

$
\|F\|=\|f_a\|_{\scriptscriptstyle L^{1}} + \|(f_k)_{k\in
  \mZ}\|_{\scriptscriptstyle \ell^1}, \;\; F(iy):=\widehat{f_a}(iy)
+\displaystyle\sum_{k=-\infty}^\infty f_k e^{-iy t_k}\;\;(y\in \mR).
$

\noindent Here $\widehat{f_a}$ is the {\em Fourier transform of}
$f_a$, 
$$
\widehat{f_a}(iy)= \displaystyle \int_{-\infty}^\infty e^{-iyt} f_a(t)
dt \quad (y \in \mR).
$$
For $ F(iy)=\widehat{f_a}(iy) +\displaystyle\sum_{k=-\infty}^\infty
f_k e^{-iy t_k}$ $(y\in \mR)$ in $\calA$, we set
$$
F_{AP}(iy)=\displaystyle\sum_{k=-\infty}^\infty f_k e^{-iy t_k}\quad 
(y\in \mR),
$$
and call it the {\em almost periodic part of $F$}. 

Recall that the algebra $AP$ of complex valued (uniformly) {\em almost periodic
  functions} is the smallest closed subalgebra of $L^\infty(\mR)$ that
contains all the functions $e_\lambda := e^{i \lambda y}$. Here the
parameter $\lambda$ belongs to $\mR$.  For any $f\in AP$, its {\em
  Bohr-Fourier series} is defined by the formal sum
\begin{equation}
\label{eq_BFs}
\sum_{\lambda} f_\lambda e^{i  \lambda y} , \quad y\in \mR,
\end{equation}
where
$$
f_\lambda:= \lim_{N\rightarrow \infty} \frac{1}{2N}
\int_{[-N,N]}   e^{-i \lambda y} f(y)dy, \quad
\lambda \in \mR,
$$
and the sum in \eqref{eq_BFs} is taken over the set $
\sigma(f):=\{\lambda \in \mR\;|\; f_\lambda \neq 0\}$, called the {\em
  Bohr-Fourier spectrum} of $f$. The Bohr-Fourier spectrum of every
$f\in AP$ is at most a countable set.

We have $\widehat{L^1(\mR)}\cap AP=\{0\}$. Indeed such an almost periodic function must have limit zero at 
$\pm\infty$, and so it must be a constant equal to zero. This follows, for example, from the 
normality of the translates of almost periodic functions 
\cite[Chapter~I, Section~2, p.14]{Cor}, which says that if $f$ is an almost periodic function, then 
 any sequence of the form $(f(x+h_n))_{n\in \mN}$, where $h_n$ are real numbers, 
one can extract a subsequence converging uniformly on the real line. 

It can also be seen easily that $\widehat{L^1(\mR)}$ is an ideal in $\calA$, since if $f_a \in L^1(\mR)$ 
and $F_{AP}:=\sum_{k\in \mZ} f_k e^{-iyt_k}$ then 
$$
\sum_{k\in \mZ} f_k f_a(\cdot -t_k)
$$
is an absolutely convergent series in $L^{1}(\mR)$, 
whose Fourier transform is precisely $\widehat{f_a}\cdot F_{AP}$. 

If $R$ is a commutative unital ring, we denote by $\inv R$ the set of invertible elements of $R$. 

If $F =\widehat{f_a}+F_{AP} \in \inv \calA$, then we have for some $G=\widehat{g_a}+G_{AP} \in \calA$ that 
$$
(\widehat{f_a}+F_{AP})(\widehat{g_a}+ G_{AP})=\underbrace{\widehat{f_a} G+ F_{AP} \widehat{g_a}}_{\in \widehat{L^1(\mR)}} +F_{AP}G_{AP}=1.
$$ 
Using the fact that $\widehat{L^1(\mR)}$ is an ideal in $\calA$ and that $L^1(\mR) \cap AP=0$, we obtain $F_{AP} G_{AP}=1$, and so $F_{AP}(i \cdot) \in \inv AP$ 
(see \cite[Section~5.3]{SasN} for a different proof).
Also, again because  $\widehat{L^1(\mR)}$ is an ideal in $\calA$, we have that  $F_{AP}^{-1}\widehat{f_a}$ is the Fourier transform of a function in
$L^{1}(\mR)$, and so the map $y \mapsto
1+(F_{AP}(iy))^{-1}\widehat{f_a}(iy)=\frac{F(iy)}{F_{AP}(iy)}$ has a
well-defined winding number ${\tt w}$ around $0$; the definition is given below. Define $W: \inv
\calA \rightarrow \mR\times \mZ$ by
\begin{equation}   \label{Windex}
 W (F)= (w_{\textrm{av}}(F_{AP}), {\tt
  w}(1+F_{AP}^{-1} \widehat{f_a}) ),
\end{equation}
 where
$F=\widehat{f_a}+F_{AP} \in \inv \calA$, and
$$
\begin{array}{ll}
w_{\textrm{av}}(F_{AP})
:=
\displaystyle \lim_{R \rightarrow \infty} \frac{1}{2R}
\bigg( \arg \big(F_{AP}(iR)\big)-\arg\big(F_{AP}(-iR)\big)\bigg),
\\
{\tt w}(1+F_{AP}^{-1} \widehat{f_a})
:=
\displaystyle \frac{1}{2\pi}\bigg( \arg \big(1+(F_{AP}(iy)\big)^{-1} \widehat{f_a}(iy)  )
\bigg|_{y=-\infty}^{y=+\infty}\bigg).
\end{array}
$$
We also recall that  
$F=\widehat{f_a}+F_{AP} \in \calA$ is invertible if and only if for all $y\in
\mR$, $F(iy) \neq 0$ and $\displaystyle \inf_{y\in \mR} |F_{AP}(iy)| >0$ .

\begin{definition}
\label{def_nu_metric_calA_+}
For $P_1, P_2 \in \mS(\calA_+)$, with the normalized 
coprime factorizations
\begin{eqnarray*}
P_1&=& N_{1}/ D_{1},\\
P_2&=& N_{2}/ D_{2},
\end{eqnarray*}
we define
\begin{equation}
\label{eq_nu_metric}
d_{\calA_+} (P_1,P_2 ):=\left\{
\begin{array}{ll}
  \|\widetilde{G}_{2} G_{1}\|_{\infty} &
  \textrm{if } G_1^* G_2 \in \inv \calA \textrm{ and }\\
& \phantom{\textrm{if }}  W(G_1^* G_2)=(0,0), \\
  1 & \textrm{otherwise}, 
\end{array}\right.
\end{equation}
where the notation is as in Subsections~\ref{subsec1}-\ref{subsec6}. 
\end{definition}

It can be seen that this gives an extension of the classical Vinnicombe $\nu$-metric. 
Let $RH^\infty$ denote the set of all rational functions that are holomorphic and bounded in the open 
right half plane $\mC_{>0}:=\{s\in \mC: \textrm{Re}(s)> 0\}$. We use the notation $C(\mT)$ 
for the algebra of complex-valued continuous 
functions defined on the unit circle $\mT:=\{z\in \mC:|z|=1\}$, 
with all operations defined pointwise. 
If $f\in \inv C(\mT)$, then $f$ has a well-defined
(integral) winding number $w(f)\in \mZ$ with respect to $0$. 

Let $\varphi$ be the conformal map
$
\varphi:\mD \rightarrow \mC_{{\scriptscriptstyle >0}}
$ from the open unit disk $\mD$ to the open right half plane $\mC_{>0}$ given by
$$
\varphi(z)=\frac{1+z}{1-z} \quad (z\in \mD).
$$
Recall that the 
classical Vinnicombe $\nu$-metric is given as follows. For all 
$P_1,P_2$ in $\mS(RH^\infty)$, 
$$
d (P_1,P_2 )=\left\{
\begin{array}{ll}
  \|\widetilde{G}_{2} G_{1}\|_{\infty} &
  \textrm{if } ((G_1^* G_2)\circ \varphi) \in \inv C(\mT) \textrm{ and }\\
& \phantom{\textrm{if }}  w((G_1^* G_2)\circ \varphi)= 0 \\
  1 & \textrm{otherwise}, 
\end{array}\right.
$$
Clearly, if $ ((G_1^* G_2)\circ \varphi) \in \inv C(\mT)$, then the almost periodic part of $G_1^* G_2$ is a nonzero constant, and 
so the average winding number of $G_1^* G_2 $ must be zero and that 
$w((G_1^* G_2)\circ \varphi)={\tt w}((G_1^* G_2))$.  If 
$((G_1^* G_2)\circ \varphi) \not\in \inv C(\mT)$, then $G_1^* G_2
\not\in \inv \calA$, and so both $d(P_1,P_2)$ and $d_{\calA_+} (P_1,P_2 )$ 
are equal to $1$. Hence we have 
$$
d(P_1,P_2)=d_{\calA_+} (P_1,P_2 )
$$
whenever $P_1,P_2\in \mS(RH^\infty)$.

\subsection{The metric $d_{H^\infty}|_{\calA_+}$}

Let $H^\infty$ be the Hardy algebra, consisting of all bounded and
holomorphic functions defined on the open unit disk 
$$
\mD:= \{ z\in
\mC: |z| <1\}.
$$ 
Given $\rho\in (0,1)$, let ${\mA_{\rho}}$ be the open annulus
$$
{\mA_{\rho}}:=\{z\in \mC: \rho<|z|<1\}.
$$
We set $
C_{\textrm b}({\mA_{\rho}})=\{F:{\mA_{\rho}}\rightarrow \mC: f \textrm{ is continuous and bounded on }{\mA_{\rho}}\}.$

Let $\rho \in (0,1)$. With the norm defined by
$$
\|F\|_\infty :=\sup_{z\in {\mA_{\rho}}}|F(z)| \textrm{ for }
F\in C_{\textrm{b}}({\mA_{\rho}}),
$$
$C_{\textrm{b}}({\mA_{\rho}})$ is a unital semisimple commutative complex Banach
algebra with the involution $\cdot^\ast$ defined by 
$$
(F^\ast)(z)= \overline{F(z)}\quad (z\in {\mA_{\rho}},\;F\in C_{\textrm{b}}({\mA_{\rho}})).
$$
Let $\rho\in (0,1)$.  For
$f\in H^\infty$, define $\calI:H^\infty \rightarrow C_{\textrm{\em b}}({\mA_{\rho}})$ by
$$
(\calI (f))(z)=f(z) \quad (z\in {\mA_{\rho}},\; f \in H^\infty).
$$
Then $\calI$ is an injective map. Henceforth we will identify $H^\infty$ as a subset of
$C_{\textrm b}({\mA_{\rho}})$ via this map $\calI$.

We use the notation $C(\mT)$ for the Banach algebra of
complex-valued continuous functions defined on the unit circle
$\mT:=\{z\in \mC:|z|=1\}$, with all operations defined pointwise, 
with the supremum norm:
$$
\|f\|_\infty= \displaystyle \sup_{\zeta \in \mT}|f(\zeta)|\textrm{ for }f\in C(\mT),
$$
and the involution $\cdot^\ast$ defined pointwise:
$$
f^*(\zeta)=\overline{f(\zeta)}\quad (\zeta \in \mT).
$$
If $F\in \inv C_{\textrm b}({\mA_{\rho}})$, then for each
$r\in (\rho,1)$, the map $F_r:\mT \rightarrow \mC$, given by
$$
F_r(\zeta)=F(r\zeta) \quad (\zeta \in \mT),
$$
belongs to $ \inv C(\mT)$, and so each $F_r$ has a well-defined
(integral) winding number $w(F_r)\in \mZ$ with respect to $0$. By the
local constancy of the winding number $w: \inv C(\mT) \rightarrow \mZ$, 
 $r\mapsto w(F_r)$ is constant on $(\rho,1)$. That is, 
if $F\in \inv C_{\textrm{b}}({\mA_{\rho}})$, and $\rho<r<r'<1$, then
$$
w(F_r)=w(F_r').
$$
We now define the map ${\tt W}:\inv C_{\textrm b}({\mA_{\rho}})\rightarrow \mZ$ by setting
$$
{\tt W}(F)=w(F_r) \quad (r\in (\rho, 1), \;F\in \inv C_{\textrm b}({\mA_{\rho}})).
$$
Then ${\tt W}$ is well-defined.

As before, let $\varphi$ be the conformal map
$
\varphi:\mD \rightarrow \mC_{{\scriptscriptstyle >0}}
$ given by
$$
\varphi(z)=\frac{1+z}{1-z} \quad (z\in \mD).
$$
For $P_1, P_2 \in \mS(\calA_+)$, with the normalized 
coprime factorizations
\begin{eqnarray*}
P_1&=& N_{1}/ D_{1},\\
P_2&=& N_{2} / D_{2},
\end{eqnarray*}
we define
\begin{equation}
\label{eq_nu_metric_specialized}
d_{H^\infty}^\rho|_{\calA_+} (P_1,P_2 )=\!\left\{\!
\begin{array}{ll}
  \|(\widetilde{G}_{2} G_{1})\circ \varphi \|_{\infty} &
  \textrm{if } (G_1^* G_2)\circ \varphi \in \inv C_{\textrm{b}}({\mA_{\rho}}) \textrm{ and }\\
  &\phantom{\textrm{if }\;}{\tt W} ((G_1^* G_2)\circ \varphi) =0, \\
  1 & \textrm{otherwise},
\end{array} \right.
\end{equation}
where the notation is as in Subsections~\ref{subsec1}-\ref{subsec6}.

It follows from \cite{Sas} that $d_{H^\infty}$ defined by 
$$
d_{H^\infty}|_{\calA_+} (P_1,P_2 )=\lim_{\rho\rightarrow 1} d_{H^\infty}^\rho(P_1,P_2)
$$ 
actually defines a metric and if $P_1,P_2\in \mS(RH^\infty)$, then 
 $
d(P_1,P_2)= d_{H^\infty}|_{\calA_+} (P_1,P_2 )$. 
Thus this is also an extension of the classical Vinnicombe metric.

\section{An example of $P_1$, $P_2$ for which $d_{\calA_+}(P_1,P_2)\neq d_{ H^\infty}(P_1,P_2)$ }
\label{section_example}

Let $P$ be given by 
$$
P(s)=\frac{\alpha}{\beta}e^{-s}.
$$
where $\alpha, \beta$ are nonzero real numbers and $\alpha^2+\beta^2=1$. 
Set 
\begin{eqnarray*}
 N&:=& \alpha e^{-s},\\
 D&:=& \beta.
\end{eqnarray*}
Then 
$$
0\cdot N + \displaystyle \frac{1}{\beta} \cdot D=1,
$$ 
and so $N,D$  are coprime in $\calA_+$. Also,  
$$
N^* \cdot N+D^*\cdot  D=\alpha e^{-\overline{s}}\cdot \alpha e^{-s} +\beta^2=\alpha^2+\beta^2=1
$$
on $i\mR$. 
Thus 
$$
P=N / D
$$
is a normalized coprime factorization of $P$. 

Now choose a real number $r$ such that $\displaystyle \frac{1}{\sqrt{2}}<r<1$, and set 
\begin{eqnarray*} 
 P_1&=& \frac{r}{\sqrt{1-r^2}} e^{-s},\\
 P_2&=& \frac{r}{-\sqrt{1-r^2}} e^{-s}.
\end{eqnarray*}
Then we have 
\begin{eqnarray*}
G_1^* G_2&=& \left[ \begin{array}{cc} \overline{N_1}& \overline{D_1} \end{array} \right]
\left[ \begin{array}{cc} N_2 \\ D_2 \end{array} \right]
=\left[ \begin{array}{cc} re^{-\overline{s}} & \sqrt{1-r^2} \end{array} \right]
\left[ \begin{array}{cc} re^{-s} \\ -\sqrt{1-r^2} \end{array} \right]
\\
&=&r^{2} e^{-\overline{s}-s} -(1-r^2)=r^2e^{-2\textrm{Re}(s)}-(1-r^2).
\end{eqnarray*}
Thus $(G_1^* G_2)|_{i\mR}=r^2-(1-r^2)=2r^2-1>2\cdot \displaystyle \frac{1}{2}-1=0$. 
Hence $G_1^* G_2\in \inv \calA$ and $w_{\textrm{av}}(G_1^* G_2)=0$. Thus $W(G_1^* G_2)=(0,0)$. 

Also, 
\begin{eqnarray*}
\widetilde{G}_2 G_1&=& \left[ \begin{array}{cc} -D_2 & N_2\end{array} \right]
\left[ \begin{array}{cc} N_1 \\ D_1 \end{array} \right]
=\left[ \begin{array}{cc} \sqrt{1-r^2}  & re^{-s}  \end{array} \right]
\left[ \begin{array}{cc} re^{-s} \\ \sqrt{1-r^2} \end{array} \right]
\\
&=& 2 r \sqrt{1-r^2} e^{-s}.
\end{eqnarray*}
Hence by the Arithmetic Mean-Geometric Mean inequality, we have 
$$
\|\widetilde{G}_2 G_1\|_\infty = 2r \sqrt{1-r^2} <r^2+(1-r^2)=1,
$$
where we do have strict inequality since $r^2\neq 1-r^2$ (because $r\neq \displaystyle \frac{1}{\sqrt{2}}$). 
Consequently, 
$$
d_{\calA_+} (P_1,P_2)=\|\widetilde{G}_2 G_1\|_\infty<1.
$$

Next we will show that  $d_{H^\infty}(P_1,P_2)=1$. Note that if $\rho$ is in $(0,1)$, then the circle $\rho \mT$  is mapped under 
 the conformal map $\varphi: \mD \rightarrow \mC_{{\scriptscriptstyle >0}}
$, given by
$$
\varphi(z)=\frac{1+z}{1-z} \quad (z\in \mD),
$$
onto the circle 
$$
\frac{1+\rho^2}{1-\rho^2}+ \frac{2\rho}{1-\rho^2} \mT
$$
in the open right half plane. This circle intersects the real axis at the points $z_1<z_2$, where 
\begin{eqnarray*}
z_1&=&\frac{1+\rho^2}{1-\rho^2}-\frac{2\rho}{1-\rho^2} =\frac{1-\rho}{1+\rho},\\
z_2&=&\frac{1+\rho^2}{1-\rho^2}+\frac{2\rho}{1-\rho^2} =\frac{1+\rho}{1-\rho}.
\end{eqnarray*}
It is clear that for $\rho$ close enough to $1$, 
\begin{eqnarray*}
(G_1^* G_2)(z_1)&\approx &r^2\cdot 1-(1-r^2)=2r^2-1>2\cdot \frac{1}{2}-1=0,\\
(G_1^* G_2)(z_2)&\approx&r^2\cdot 0-(1-r^2) =-(1-r^2)<0.
\end{eqnarray*}
But $G_1^* G_2=r^2e^{-2\textrm{Re}(s)}-(1-r^2)$ is always real-valued. By the Intermediate Value Theorem, 
it follows that it must be a zero somewhere in $\mA_\rho$ and $G_1^* G_2$ can't belong to $\inv C_{\textrm b}({\mA_{\rho}})$. 
In fact, all zeros  belong  to an arc of a circle with center on the real axis, tangent to $z=1$, as can be seen easily in 
the right half plane. 
Hence $d_{H^\infty}(P_1,P_2)=1$. 

\section{Equivalence of $d_{\calA_+}$ and  $d_{H^\infty}|_{\calA_+}$} 
\label{section_equivalence}

\subsection{An alternative expression for $d_{H^\infty}|_{\calA_+}$} 

We begin by giving an alternative expression for $d_{H^\infty}$. 

If $M \in \mC^{p\times m}$, then the set of nonzero  eigenvalues of $MM^*$
  and $M^* M$ coincide. We denote by $\overline{\sigma}(M)$ the square root of the largest eigenvalue of $M^*M$ (or equivalently $MM^*$). 
 For a matrix $M \in \calA^{p\times m}$, we set
\begin{equation}
\label{norm}
\|M\|_{\infty}= \sup_{y\in \mR} \overline{\sigma}(M(iy)).
\end{equation}

\begin{proposition}
\label{prop_alt_exp_nu_metric}
If $P_1, P_2\in \mS(\calA_+)$, then for each $\rho\in (0,1)$, 
 $$
d_{H^\infty}^\rho (P_1, P_2)= \displaystyle \inf_{\substack{Q \in \textrm{\em inv } C_{\textrm{\em b}}({\mA_{\rho}}) ,\\
    {\tt W}( Q)=0}} \|G_1 -G_2 Q\|_\infty.
$$
\end{proposition}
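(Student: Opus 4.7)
My plan is to compute $\|G_1 - G_2 Q\|^2$ pointwise via an orthogonal decomposition in $\mC^2$, and then to treat the two cases in the definition of $d_{H^\infty}^\rho$ separately. The key identity comes from the observation that for every $z$, the vectors $G_2(z)$ and $\widetilde{G}_2^*(z)$ in $\mC^2$ satisfy $G_2^*\widetilde{G}_2^* = 0$ (direct computation from the structure of $\widetilde{G}_2$) and have common norm $\sqrt{(G_2^*G_2)(z)}$. Decomposing $G_1 = \alpha G_2 + \beta \widetilde{G}_2^*$ with $\alpha = (G_2^*G_1)/(G_2^*G_2)$ and $\beta = (\widetilde{G}_2 G_1)/(G_2^*G_2)$ then yields, at every $z \in \mA_\rho$ with $(G_2^*G_2)(z) > 0$, the identity
\[
\|G_1 - G_2 Q\|^2 \;=\; |Q - \alpha|^2 \,(G_2^* G_2) \;+\; \frac{|\widetilde{G}_2 G_1|^2}{G_2^*G_2}.
\]

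In the case where $(G_1^*G_2)\circ \varphi \in \inv C_{\textrm{b}}(\mA_\rho)$ with ${\tt W} = 0$, invertibility forces $G_2^*G_2 > 0$ throughout $\mA_\rho$ (a zero of $G_2$ would force a zero of $G_1^*G_2$). The pointwise minimizer of the identity above is $Q_0 := \alpha = (G_2^*G_1)/(G_2^*G_2)$, and I would check that $Q_0 \in \inv C_{\textrm{b}}(\mA_\rho)$ and that ${\tt W}(Q_0) = -{\tt W}(G_1^*G_2) - {\tt W}(G_2^*G_2) = 0$ (using that $G_2^*G_2$ is real-positive and therefore has winding zero). Combined with the pointwise lower bound that holds for every $Q$, this gives $\inf_Q \|G_1 - G_2 Q\|_\infty^2 = \sup_{\mA_\rho} |\widetilde{G}_2 G_1|^2/(G_2^*G_2)$. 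It then remains to identify this supremum with $\|\widetilde{G}_2 G_1\|_\infty^2$, which amounts to the pointwise inequality $|\widetilde{G}_2 G_1|^2 \leq \|\widetilde{G}_2 G_1\|_\infty^2 \cdot (G_2^*G_2)$ on $\mA_\rho$. This inequality holds with equality on $\mT$ by the boundary normalization $G_2^*G_2 = 1$, and would be propagated into $\mA_\rho$ using subharmonicity of $|\widetilde{G}_2 G_1|^2$ and of $|G_2|^2 = G_2^*G_2$, combined with the inner--outer factorization of $\widetilde{G}_2 G_1$. Establishing this sharp interior bound is the main technical obstacle of the proof.

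In the remaining case $d_{H^\infty}^\rho = 1$ and the claim is that the infimum equals $1$. The upper bound is obtained by taking $Q = \epsilon$ a small positive constant, which gives $\|G_1 - \epsilon G_2\|_\infty \to \|G_1\|_\infty = 1$ as $\epsilon \to 0^+$. For the lower bound I would argue by contradiction: if $\|G_1 - G_2 Q\|_\infty < 1$ for some admissible $Q$, then examining the squared-norm inequality as $|z| \to 1^-$ (where $G_1^*G_1, G_2^*G_2 \to 1$ by the boundary normalization) yields $|Q - \overline{G_1^*G_2}| < |G_1^*G_2|$ in the boundary limit. This forces $G_1^*G_2$ to be nonvanishing there and $Q/\overline{G_1^*G_2}$ into the open disk $\{|w-1|<1\}$, hence into the open right half-plane; therefore the winding of $G_1^*G_2$ on circles $|z|=r$ close to $1$ equals $-{\tt W}(Q)=0$, and by local constancy of the winding in $\inv C_{\textrm{b}}$ one deduces $G_1^*G_2 \in \inv C_{\textrm{b}}(\mA_\rho)$ with ${\tt W}(G_1^*G_2) = 0$, contradicting the case assumption. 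The remaining subcase, where $G_1^*G_2$ vanishes on $\mT$, is handled directly: at such a boundary point the pointwise identity forces $\|G_1 - G_2 Q\|^2 \geq 1$ in the limit.
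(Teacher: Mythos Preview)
Your approach differs from the paper's in two essential respects, and in both places the paper's route is shorter and bypasses the difficulty you flag as the ``main technical obstacle.''

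\textbf{Case 1.} The paper does not divide by $G_2^*G_2$. It invokes the normalization identity
\[
\left[\begin{array}{cc} G_2 & \widetilde{G}_2^* \end{array}\right]
\left[\begin{array}{c} G_2^* \\ \widetilde{G}_2 \end{array}\right] = I
\]
(which is exactly $G_2^*G_2=1$) to conclude that left multiplication by $\bigl[\begin{smallmatrix} G_2^* \\ \widetilde{G}_2 \end{smallmatrix}\bigr]$ is isometric, so that
\[
\|G_1 - G_2 Q\|_\infty
= \left\|\left[\begin{array}{c} G_2^*G_1 - Q \\ \widetilde{G}_2 G_1 \end{array}\right]\right\|_\infty .
\]
The lower bound $\|G_1-G_2Q\|_\infty\ge\|\widetilde G_2G_1\|_\infty$ is then immediate, and the matching upper bound comes from the choice $Q_0 = G_2^*G_1$ (not $G_2^*G_1/(G_2^*G_2)$), which kills the top entry outright. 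Your pointwise minimizer differs from this by the factor $G_2^*G_2$, and the interior estimate $|\widetilde{G}_2 G_1|^2 \le \|\widetilde{G}_2 G_1\|_\infty^2\,(G_2^*G_2)$ you are led to is an artifact of that division; the paper never needs it. Your proposed subharmonicity/inner--outer route for that estimate is only sketched and not evidently correct.

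\textbf{Case 2.} The paper's contradiction is algebraic and uniform on $\mA_\rho$: if $\|G_1 - G_2 Q\|_\infty < 1$, then using $G_1^*G_1=1$ one has
\[
\|1 - G_1^*G_2 Q\|_\infty = \|G_1^*(G_1 - G_2 Q)\|_\infty \le \|G_1^*\|_\infty\|G_1 - G_2 Q\|_\infty < 1,
\]
so $G_1^*G_2 Q \in \inv C_{\textrm{b}}(\mA_\rho)$, and the straight-line homotopy $t\mapsto 1 - t(1 - G_1^*G_2 Q)$ within $\inv C_{\textrm{b}}(\mA_\rho)$ forces ${\tt W}(G_1^*G_2 Q)=0$, hence ${\tt W}(G_1^*G_2)=0$. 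Your boundary-limit argument is genuinely weaker: $Q\in C_{\textrm{b}}(\mA_\rho)$ need not extend to $\mT$, so statements ``in the boundary limit'' require care; and more seriously you only deduce nonvanishing and winding-zero of $G_1^*G_2$ on circles $|z|=r$ with $r$ near $1$. Since $G_1^*G_2=\overline{N_1}N_2+\overline{D_1}D_2$ is not holomorphic, nonvanishing near $\mT$ does not rule out zeros deeper in $\mA_\rho$, so the contradiction is incomplete as written.
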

\begin{proof} Let $Q \in \inv  C_{\textrm{b}}({\mA_{\rho}})$ and
${\tt W}(Q)=0$. We have
\begin{eqnarray*}
\|G_1 -G_2 Q\|_\infty &=&
\left\| \left[ \begin{array}{cc} G_2^* \\
\widetilde{G}_2 \end{array} \right] (G_1-G_2 Q) \right\|_\infty
\;\; \textrm{(as }
\left[\begin{array}{cc} G_2 & \widetilde{G}_2^* \end{array}\right]
\left[\begin{array}{c} G_2^* \\\widetilde{G}_2 \end{array}\right]=I\textrm{)} \\
&=&
\left\| \left[\begin{array}{c} G_2^* G_1-Q\\ \widetilde{G}_2G_1 \end{array}\right]  \right\|_\infty
\quad \textrm{(since } \widetilde{G}_2 G_2=0 \textrm{ and }  G_2^*G_2=I\textrm{)} \\
&\geq & \| \widetilde{G}_2G_1 \|_\infty .\phantom{\left[\begin{array}{c} A \\B \end{array}\right]}
\end{eqnarray*}
So if $G_2^* G_1 \in \inv C_{\textrm{b}}({\mA_{\rho}})$ and ${\tt W}(G_2^* G_1)=0$, then from
the above it follows that $ \|G_1 -G_2 Q\|_\infty \geq \|
\widetilde{G}_2G_1 \|_\infty=d_{H^\infty}^\rho(P_1,P_2)$.  As the choice of $Q$
above was arbitrary, we obtain
\begin{equation}
\label{eq_nu_metrix_alt_exp_ineq_1}
\inf_{\substack{Q \in \inv  C_{\textrm{b}}({\mA_{\rho}}),\\
{\tt W}( Q)=0}} \|G_1 -G_2 Q\|_\infty \geq d_{H^\infty}^\rho(P_1,P_2).
\end{equation}
If we define $Q_0:= G_2^* G_1 \in C_{\textrm{b}}({\mA_{\rho}})$,
then $Q_0\in \inv C_{\textrm{b}}({\mA_{\rho}})$ and ${\tt W} (Q_0)=0$, and so
\begin{eqnarray*}
  \inf_{\substack{Q \in \inv C_{\textrm{b}}({\mA_{\rho}}) ,\\
      {\tt W}(Q)=0}} \|G_1 -G_2 Q\|_\infty
  &\leq&  \|G_1-G_2 Q_0\|_\infty
  =
  \left\| \left[\begin{array}{c} G_2^* G_1-Q_0\\ \widetilde{G}_2G_1 \end{array}\right]  \right\|_\infty
  \\
  &=&\left\| \left[\begin{array}{c} 0\\ \widetilde{G}_2G_1 \end{array}\right]  \right\|_\infty
  =
  \|\widetilde{G}_2G_1\|_\infty =d_{H^\infty}^\rho(P_1,P_2).
\end{eqnarray*}
From this and \eqref{eq_nu_metrix_alt_exp_ineq_1}, the claim in the
proposition follows for the case when $G_2^* G_1\in \inv C_{\textrm{b}}({\mA_{\rho}})$ and ${\tt W}
(G_2^* G_1)=0$.

Now let $Q\in \inv C_{\textrm{b}}({\mA_{\rho}})$ be such that ${\tt W} (Q)=0$ and
$\|G_1-G_2 Q\|_\infty <1$. Using $G_1^* G_1=1$, we see that $\|G_1^*\|_\infty =1$ and 
$$
\|1-G_1^* G_2 Q\|_\infty =\|G_1^*(G_1-G_2 Q)\|_\infty
\leq\|G_1^*\|_\infty \|G_1-G_2 Q\|_\infty<1\cdot 1=1.
$$
So $ G_1^* G_2 Q=1-(1-G_1^* G_2 Q) $ is invertible as an element of
$C_{\textrm{b}}({\mA_{\rho}})$. Consider the map $H:[0,1]\rightarrow \inv C_{\textrm{b}}({\mA_{\rho}}) $ given by $
H(t)= 1-t(1-G_1^* G_2 Q)$, $t\in [0,1]$.  By the homotopic invariance
of the index ${\tt W}$ \cite[Proposition~2.1]{BalSas},
$$
0={\tt W}(1)={\tt W} (H(0))={\tt W} (H(1))={\tt W}(G_1^* G_2 Q).
$$
As ${\tt W}(Q)=0$, we obtain that ${\tt W}(G_1^* G_2)=0$. So we have shown
that if there is a $Q\in C_{\textrm{b}}({\mA_{\rho}})$ such that $Q\in \inv C_{\textrm{b}}({\mA_{\rho}})$, ${\tt W}
(Q)=0$ and $\|G_1-G_2 Q\|_\infty <1$, then $G_1^* G_2 \in 
\inv C_{\textrm{b}}({\mA_{\rho}})$ and ${\tt W}(G_1^* G_2)=(0,0)$. Thus we have that if either $G_1^* G_2
\not\in \inv C_{\textrm{b}}({\mA_{\rho}})$ or $G_1^* G_2 \in \inv C_{\textrm{b}}({\mA_{\rho}})$ but ${\tt W}(G_1^*
G_2)\neq 0$, then for all elements $Q\in C_{\textrm{b}}({\mA_{\rho}})$ such that $Q\in \inv
 C_{\textrm{b}}({\mA_{\rho}})$, ${\tt W} (Q)=0$, we have that $\|G_1-G_2 Q\|_\infty \geq 1$, and
so
$$
\inf_{\substack{Q \in \inv C_{\textrm{b}}({\mA_{\rho}}) ,\\
{\tt W}(Q)=0}} \|G_1 -G_2 Q\|_\infty \geq 1=d_{H^\infty}^\rho(P_1,P_2).
$$
Also, with $Q_n:=\displaystyle \frac{1}{n} I$ ($n\in \mN$), $Q_n\in \inv C_{\textrm{b}}({\mA_{\rho}})$ and
${\tt W} ( Q_n)=0$. We have
$$
\|G_1-G_2 Q_n\|_\infty \leq \|G_1\|_\infty + \|G_2\|_\infty
\|Q_n\|_\infty\leq 1+1\cdot \frac{1}{n}.
$$
Hence
\begin{eqnarray*}
\inf_{\substack{Q \in \inv C_{\textrm{b}}({\mA_{\rho}}) ,\\
    {\tt W}(Q)=0}} \|G_1 -G_2 Q\|_\infty &\leq& \inf_{n\in \mN} \|G_1 -G_2
 Q_n\|_\infty\\
&\leq& \inf_{n\in \mN} \left(1+\frac{1}{n}\right)=1=d_{H^\infty}^\rho(P_1,P_2).
\end{eqnarray*}
Consequently,
 $
\displaystyle \inf_{\substack{Q \in \inv C_{\textrm{b}}({\mA_{\rho}}) ,\\
{\tt W}(Q)=0}} \|G_1 -G_2 Q\|_\infty=1=d_{H^\infty}^\rho(P_1,P_2)$.
\end{proof}

For $P\in \mS(\calA_+)$, set $\mu_{\textrm{opt},\calA_+}(P):=\displaystyle \sup_{C\in \mS(\calA_+)} \mu_{P,C}$, where 
$$
\mu_{P,C}=\left\{ \begin{array}{ll}
\|H(P,C)\|_{\infty}^{-1} &\textrm{if }P \textrm{ is stabilized by }C,\\
0 & \textrm{otherwise,}
\end{array}\right.
$$
and 
$$
H(P,C):= \left[\begin{array}{cc} P \\ 1 \end{array} \right]
(1-CP)^{-1} \left[\begin{array}{cc} -C & 1 \end{array} \right].
$$
We remark that first of all $\mu_{\textrm{opt},\calA_+}(P)>0$ because every 
$P \in \mS(\calA_+)$ has a coprime factorisation, and we know that the coprime factorization 
 gives a stabilizing controller. Secondly, as $\mu_{P,C}$ is always bounded above by $1$ (see \cite[Remark~4.3]{BalSas}), we have that 
$\mu_{\textrm{opt},\calA_+}(P)\leq 1$.

\subsection{The gap-metric} In this subsection we will recall the gap-metric topology for unstable
plants over the ring $\calA_+$. We will also recall a few known results from \cite{Sas0}
lemmas which will be used in the next subsection in order to prove our claimed equivalence.

\begin{definition}[Graph of a system]
  For $P\in \mS(\calA_+)$, with the normalized coprime factorization $
  P= N / D$, we define the {\em graph of} $P$, denoted by $\calG$,
  to be the following subspace of the Hardy space
  $H^2\times H^2$:\label{pageref_calG}
$$
\calG=G H^2=\left\{\left[ \begin{array}{cc} N\varphi \\ D
      \varphi\end{array} \right]: \varphi \in H^2\right\}.
$$
\end{definition}

Here $H^2$ denotes the Hardy space of all holomorphic functions defined in the open right half plane $\mC_{>0}:=\{s\in \mC: \textrm{Re}(s)>0\}$ such that 
$$
\sup_{\zeta>0}\|f(\zeta+i\cdot)\|_{L^2(\mR)}<+\infty.
$$

One can see that $\calG$ is a closed subspace of $H^2\times
  H^2$. Suppose that
$$
\left[ \begin{array}{cc} N\\ D
      \end{array} \right]\varphi_n 
\stackrel{n\rightarrow\infty}{\longrightarrow} 
\left[ \begin{array}{cc} f\\ g
      \end{array} \right]
$$ 
in $H^2\times H^2$. If $X,Y\in \calA_+$ are such that $XN+YD=1$,
then using the fact that elements from $\calA_+$ are bounded and
holomorphic in the right half plane, we obtain that
$$
\varphi_n \stackrel{n\rightarrow\infty}{\longrightarrow} Xf+Yg =:\varphi
$$
in $H^2$. Consequently, using the fact that $N,D$ are bounded and holomorphic in the open right half plane, we obtain 
$$ 
\left[ \begin{array}{cc} N\\ D
      \end{array} \right]\varphi_n 
\stackrel{n\rightarrow\infty}{\longrightarrow}
\left[ \begin{array}{cc} N\\ D
      \end{array} \right]\varphi\in \calG.
$$

We denote the orthogonal projection from $H^2\times
H^2$ onto $\calG$ by $P_{\calG}$.\label{pageref_P_calG}

\begin{definition}[Gap-metric $d_g$]
\label{def_graph_metric}
For $P_1, P_2 \in \mS(\calA_+)$, with the normalized coprime
factorizations $P_1= N_{1}/ D_{1}$ and $P_2= N_{2}/ D_{2}$, we
define \label{pageref_d_g}
\begin{equation}
\label{eq_graph_metric}
d_{\textrm{g}} (P_1,P_2 ):=
\|P_{\calG_1}-P_{\calG_2}\|_{\calL(H^2\times H^2)}.
\end{equation}
\end{definition}

We recall \cite[Proposition~4.9~and~Theorem~1.1]{Sas0}:

\begin{proposition}
\label{prop_alt_exp_gap_metric}
If $P_1,P_2\in \mS(\calA_+)$, then
$$
d_{\textrm{\em g}}(P_1,P_2)=\inf_{Q\in \textrm{\em inv } \calA_+} \|G_1-G_2 Q\|_\infty.
$$
\end{proposition}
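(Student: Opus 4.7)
The strategy is to adapt the proof of Proposition~\ref{prop_alt_exp_nu_metric}, but with the algebra $C_{\textrm{b}}({\mA_{\rho}})$ replaced by $\calA_+$. A notable simplification is that the winding-number side condition ${\tt W}(Q)=0$ disappears: invertibility in $\calA_+ \subset H^\infty$ already makes multiplication by $Q$ an isomorphism of $H^2$. I would split the argument into the two inequalities, and first I would verify the standard Hilbert-space identity $d_g(P_1,P_2) = \vec\delta(\calG_1,\calG_2) = \vec\delta(\calG_2,\calG_1)$, where $\vec\delta(\calG,\calH):=\|(I-P_{\calH})|_{\calG}\|$; both directed gaps agree in our setting because $\calG_1$ and $\calG_2$ are ranges of the single-column isometric multipliers $M_{G_i}:H^2\rightarrow H^2\times H^2$ (isometric because the normalization $G_i^*G_i=1$ on $i\mR$ gives $\|G_i\varphi\|^2 = \int(|N_i|^2+|D_i|^2)|\varphi|^2\,dy = \|\varphi\|^2$), so both have equal Fredholm index zero as subspaces of $H^2\times H^2$.

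For the inequality $d_g(P_1,P_2)\le \inf_{Q\in \inv\calA_+}\|G_1-G_2Q\|_\infty$, let $Q\in \inv\calA_+$. Since $Q,Q^{-1}\in H^\infty$, the operator $M_Q$ is a bijection of $H^2$, hence $G_2Q\cdot H^2 = G_2\cdot H^2=\calG_2$. Therefore for every $\varphi\in H^2$ the vector $G_2Q\varphi$ lies in $\calG_2$, so
$$
\|(G_1-G_2Q)\varphi\|_{H^2\times H^2} \;\ge\; \mathrm{dist}\bigl(G_1\varphi,\calG_2\bigr) \;=\; \|(I-P_{\calG_2})G_1\varphi\|.
$$
Taking the supremum over $\|\varphi\|_{H^2}\le 1$ and using $\|M_F\|_{\mathcal L(H^2)}=\|F\|_\infty$ for $F\in \calA\subset L^\infty$ yields $\|G_1-G_2Q\|_\infty \ge \vec\delta(\calG_1,\calG_2)=d_g(P_1,P_2)$.

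For the reverse inequality, the natural candidate is $Q_0 := G_2^*G_1 \in \calA$. Using the pointwise identity $G_2G_2^* + \widetilde G_2^*\widetilde G_2 = I$ on $i\mR$ (the same identity employed in the proof of Proposition~\ref{prop_alt_exp_nu_metric}) together with $G_1^*G_1=1$, a direct computation gives
$$
G_1-G_2Q_0 \;=\; (I-G_2G_2^*)G_1 \;=\; \widetilde G_2^*\widetilde G_2 G_1, \qquad \|G_1-G_2Q_0\|_\infty = \|\widetilde G_2 G_1\|_\infty,
$$
and one can verify that the right-hand side equals $\vec\delta(\calG_1,\calG_2)=d_g(P_1,P_2)$ by reinterpreting $M_{\widetilde G_2^*}$ as an isometry onto $\calG_2^\perp$.

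The main obstacle is that $Q_0=G_2^*G_1$ has generally an anti-analytic part, so it lies only in $\calA$, not in $\inv\calA_+$. To close the gap one must produce a sequence $Q_n\in \inv\calA_+$ with $\|G_1-G_2Q_n\|_\infty \rightarrow \|\widetilde G_2 G_1\|_\infty$. When $d_g(P_1,P_2)<1$, we have $Q_0\in \inv\calA$ with $W(Q_0)=(0,0)$, and the desired $Q_n\in \inv\calA_+$ can be constructed from a Bezout-type argument in $\calA_+$ (using that $P_1,P_2\in \mS(\calA_+)$ furnishes coprime factorizations, hence stabilizing controllers) combined with an approximation that preserves invertibility. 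When $d_g(P_1,P_2)=1$, the trivial choice $Q_n=\tfrac1n\in \inv\calA_+$ gives $\|G_1-G_2Q_n\|_\infty \le 1+\tfrac1n$. This invertibility-preserving approximation within $\calA_+$ is the delicate step that distinguishes the gap-metric formula from the $\nu$-metric formula in Proposition~\ref{prop_alt_exp_nu_metric}, and is the content of \cite[Proposition~4.9~and~Theorem~1.1]{Sas0}.
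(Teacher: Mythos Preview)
The paper does not prove this proposition at all; it simply quotes it from \cite{Sas0}. Your first inequality, $d_{\textrm{g}}(P_1,P_2)\le \inf_{Q\in \inv\calA_+}\|G_1-G_2Q\|_\infty$, is correct and is essentially the easy direction. The difficulty lies entirely in the reverse inequality, and there your outline contains a genuine error.

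You assert that $\|\widetilde G_2 G_1\|_\infty = \vec\delta(\calG_1,\calG_2)=d_{\textrm{g}}(P_1,P_2)$, justified by ``reinterpreting $M_{\widetilde G_2^*}$ as an isometry onto $\calG_2^\perp$.'' This is false: $\widetilde G_2^*$ has anti-analytic entries, so $\widetilde G_2^* H^2$ does not sit inside $H^2\times H^2$, and the orthogonal complement of $\calG_2$ \emph{in $H^2\times H^2$} is not the range of $M_{\widetilde G_2^*}$. What is true is the $L^2$-identity, which yields only $\|\widetilde G_2 G_1\|_\infty \le d_{\textrm{g}}(P_1,P_2)$, and the inequality can be strict (this is precisely why the $\nu$-metric and the gap metric differ). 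Consequently your plan of producing $Q_n\in\inv\calA_+$ with $\|G_1-G_2Q_n\|_\infty\to\|\widetilde G_2 G_1\|_\infty$ is aimed at the wrong target: were it to succeed, combining it with your first inequality would force $d_{\textrm{g}}(P_1,P_2)=\|\widetilde G_2 G_1\|_\infty$, a contradiction in general.

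The correct route for the hard direction starts instead from Georgiou's formula $\vec\delta(\calG_1,\calG_2)=\inf_{Q\in H^\infty}\|G_1-G_2Q\|_\infty$ (a commutant-lifting / Nehari result), so that the task becomes showing the $H^\infty$-optimizer can be approximated by elements of $\inv\calA_+$ without increasing the norm. That approximation, together with the equality of the two directed gaps, is exactly what \cite[Proposition~4.9 and Theorem~1.1]{Sas0} supplies and is not recoverable from the $Q_0=G_2^*G_1$ ansatz.
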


\begin{proposition}
For $P_1,P_2\in \mS(\calA_+)$:
\begin{equation}
\label{eq_equiv_inqs}
d_{\textrm{\em g}}(P_1,P_2)\mu_{\textrm{\em{opt}},\calA_+}(P_1)\leq d_{\calA_+}(P_1,P_2) \leq d_{\textrm{\em g}}(P_1,P_2).
\end{equation}
\end{proposition}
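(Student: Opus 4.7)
I would prove the two inequalities separately. For the upper bound $d_{\calA_+}(P_1,P_2)\le d_g(P_1,P_2)$, I would first repeat the proof of Proposition~\ref{prop_alt_exp_nu_metric} verbatim (with the algebra $C_{\textrm b}(\mA_\rho)$ and the index $\tt W$ replaced throughout by $\calA$ and $W$) to obtain
$$
d_{\calA_+}(P_1,P_2)=\inf_{\substack{Q\in \inv\calA,\\ W(Q)=(0,0)}}\|G_1-G_2 Q\|_\infty.
$$
Combined with Proposition~\ref{prop_alt_exp_gap_metric}, the claim then reduces to the inclusion $\inv\calA_+\subseteq\{Q\in\inv\calA : W(Q)=(0,0)\}$. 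For $Q\in\inv\calA_+$, both $Q_{AP}$ and $Q_{AP}^{-1}$ are almost periodic functions with Bohr--Fourier spectrum in $(-\infty,0]$ (since elements of $\calA_+$ have AP part of the form $\sum_{k\ge 0}f_k e^{-iy t_k}$ with $t_k\ge 0$), forcing their mean motions to be nonpositive and to sum to $0$; hence $w_{\textrm{av}}(Q_{AP})=0$. Since $Q/Q_{AP}=1+Q_{AP}^{-1}\widehat{q_a}$ extends to a holomorphic, bounded, nonvanishing function on $\mC_{\ge 0}$ which tends to $1$ as $|y|\to\infty$ along $i\mR$ (Riemann--Lebesgue applied to $\widehat{q_a}$), the argument principle yields ${\tt w}(1+Q_{AP}^{-1}\widehat{q_a})=0$.

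For the lower bound $d_g(P_1,P_2)\,\mu_{\textrm{opt},\calA_+}(P_1)\le d_{\calA_+}(P_1,P_2)$, I would split into cases. If $d_{\calA_+}(P_1,P_2)=1$ the inequality follows from $d_g\le 1$ and $\mu_{\textrm{opt},\calA_+}(P_1)\le 1$. Otherwise set $\delta:=\|\widetilde{G}_2 G_1\|_\infty=d_{\calA_+}(P_1,P_2)<1$. Given $\epsilon\in(0,\mu_{\textrm{opt},\calA_+}(P_1))$, pick a stabilising controller $C$ of $P_1$ with $\mu_{P_1,C}>\mu_{\textrm{opt},\calA_+}(P_1)-\epsilon$. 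If $\delta\ge \mu_{P_1,C}$ then $d_g(\mu_{\textrm{opt},\calA_+}(P_1)-\epsilon)\le \mu_{P_1,C}\le\delta=d_{\calA_+}$ and we are done; otherwise $\delta<\mu_{P_1,C}$, and the robust stability theorem from \cite{BalSas} ensures that $C$ stabilises $P_2$ as well, so $\widetilde{K}G_2\in\inv\calA_+$. I would then set
$$
Q:=(\widetilde{K}G_1)^{-1}(\widetilde{K}G_2)\in\inv\calA_+.
$$

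The pointwise orthogonal decomposition on $i\mR$ coming from $I=G_1 G_1^*+\widetilde{G}_1^*\widetilde{G}_1$ gives $\|G_2-G_1 Q\|^2=|G_1^* G_2-Q|^2+|\widetilde{G}_1 G_2|^2$. A short computation using $\widetilde{K}G_1\cdot G_1^*=\widetilde{K}(I-\widetilde{G}_1^*\widetilde{G}_1)$ shows $G_1^* G_2-Q=-(\widetilde{K}G_1)^{-1}\widetilde{K}\widetilde{G}_1^*\widetilde{G}_1 G_2$; coupling this with $|\widetilde{K}\widetilde{G}_1^*|=|K^* G_1|$ and with the Parseval-type identity $|K^* G_1|^2+|\widetilde{K}G_1|^2=1$ (itself a consequence of $KK^*+\widetilde{K}^*\widetilde{K}=I$) collapses the two pointwise contributions into $|\widetilde{G}_1 G_2|^2/|\widetilde{K}G_1|^2$. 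Since $|\widetilde{G}_1 G_2|=|\widetilde{G}_2 G_1|$ and $\mu_{P_1,C}=\inf_y|\widetilde{K}(iy)G_1(iy)|$ (from $H(P_1,C)=G_1(\widetilde{K}G_1)^{-1}\widetilde{K}$ together with the unit-norm properties of $G_1$ and $\widetilde{K}$), taking a supremum yields $\|G_2-G_1 Q\|_\infty\le \delta/\mu_{P_1,C}$. By Proposition~\ref{prop_alt_exp_gap_metric} and the symmetry $d_g(P_1,P_2)=d_g(P_2,P_1)$, one concludes $d_g\,\mu_{P_1,C}\le \delta=d_{\calA_+}$, whence $d_g(\mu_{\textrm{opt},\calA_+}(P_1)-\epsilon)\le d_{\calA_+}$; letting $\epsilon\to 0^+$ completes the proof.

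The main obstacle is the algebraic collapse in the last step: discovering that $Q=(\widetilde{K}G_1)^{-1}(\widetilde{K}G_2)$ is the right approximant and arranging the identities so that the two pointwise contributions combine exactly into $|\widetilde{G}_2 G_1|/|\widetilde{K}G_1|$, which is precisely the ratio for which robust stability of \cite{BalSas} cleanly brings $\mu_{\textrm{opt},\calA_+}(P_1)$ into the estimate. The verification $W(Q)=(0,0)$ for $Q\in\inv\calA_+$, needed (tacitly) in the upper bound, is the only other place requiring the argument principle.
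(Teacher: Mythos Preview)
Your proof is correct, but note that the paper itself does not prove this proposition: it is quoted as a known result from \cite{Sas0}, so there is no in-paper argument to compare against directly.

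That said, your lower-bound argument is essentially the same device the paper deploys in Section~\ref{section_equivalence} when comparing $d_{H^\infty}$ and $d_{\calA_+}$: the choice $Q=(\widetilde{K}G_1)^{-1}\widetilde{K}G_2$ with $C$ a near-optimal stabilising controller for $P_1$, combined with the robust-stability result of \cite{BalSas} to guarantee $\widetilde{K}G_2\in\inv\calA_+$. The only methodological difference is that the paper bounds $\|G_2-G_1Q\|_\infty$ by the one-line submultiplicative estimate
\[
\|G_2-G_1Q\|_\infty=\|K(\widetilde{G}_1K)^{-1}\widetilde{G}_1G_2\|_\infty
\le \|K\|_\infty\,\|(\widetilde{G}_1K)^{-1}\|_\infty\,\|\widetilde{G}_1G_2\|_\infty
\le \frac{\delta}{\mu_{P_1,C}},
\]
whereas you carry out the finer pointwise computation yielding the exact identity $|G_2-G_1Q|=|\widetilde{G}_1G_2|/|\widetilde{K}G_1|$ on $i\mR$. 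Your route gives a sharper pointwise statement but the same final bound; the paper's route is shorter.

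For the upper bound, your reduction to the inclusion $\inv\calA_+\subseteq\{Q\in\inv\calA:W(Q)=(0,0)\}$ via the $\calA$-analogue of Proposition~\ref{prop_alt_exp_nu_metric} is the natural line and is correct. In fact, in the abstract framework of \cite{BalSas} the triviality of the index on $\inv R$ is built in as an axiom, so one could simply invoke that; your direct verification (nonpositive mean motions summing to zero, plus the argument principle for the $L^1$ part) is a self-contained alternative. One small remark: to conclude ${\tt w}(Q/Q_{AP})=0$ you need $Q_{AP}$ to be invertible in $\calA_+$ (not just in $AP$), which you implicitly use when asserting $Q/Q_{AP}$ extends to a bounded nonvanishing function on $\mC_{\ge 0}$; this follows because $(Q^{-1})_{AP}=Q_{AP}^{-1}\in\calA_+$, but it is worth making explicit.
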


\subsection{Equivalence}
Let $P_1,P_2\in \mS(\calA_+)$. Then
\begin{eqnarray*}
 d_{H^\infty}^\rho(P_1,P_2)&=& \displaystyle \inf_{\substack{Q \in \textrm{inv } C_{\textrm{b}}({\mA_{\rho}}) ,\\
    {\tt W}( Q)=0}} \|G_1 -G_2 Q\|_\infty\\
&\leq & \displaystyle \inf_{\substack{Q \in (\textrm{inv } C_{\textrm{b}}({\mA_{\rho}}) )\cap H^\infty ,\\
    {\tt W}( Q)=0}} \|G_1 -G_2 Q\|_\infty\\
&=& \displaystyle \inf_{Q \in \textrm{inv }  H^\infty}  \|G_1 -G_2 Q\|_\infty\\
&\leq & \displaystyle \inf_{Q \in \textrm{inv }  \calA_+}  \|G_1 -G_2 Q\|_\infty\\
&=& d_{\textrm{\em g}}(P_1,P_2)\\
&\leq& \frac{d_{\calA_+}(P_1,P_2)}{ \mu_{\textrm{opt},\calA_+}(P_1) }.
\end{eqnarray*}
Consequently,
\begin{equation}
 \label{equiv_1_ineq}
d_{H^\infty}(P_1,P_2)=\lim_{\rho\rightarrow 1}  d_{H^\infty}^\rho(P_1,P_2) \leq \frac{d_{\calA_+}(P_1,P_2)}{ \mu_{\textrm{opt},\calA_+}(P_1) }.
\end{equation}

Next we will show that 
$$
d_{\calA_+}(P_1,P_2) \mu_{\textrm{opt},\calA_+}(P_1)  \leq d_{H^\infty}(P_1,P_2).
$$
This inequality is trivially satisfied if $d_{H^\infty}(P_1,P_2) \geq \mu_{\textrm{opt},\calA_+}(P_1) $, since we know that 
$d_{\calA_+}(P_1,P_2) \leq 1$. 

So we will only consider the case when $d_{H^\infty}(P_1,P_2)<\mu_{\textrm{opt},\calA_+}(P_1) $. 
In particular, $\mu_{\textrm{opt},\calA_+}(P_1) >0$. This inequality implies that there is an element 
 $C_0\in \mS(\calA_+)$ that stabilizes $P_1$. Moreover, 
$d_{H^\infty}(P_1,P_2)<\mu_{P_1,C_0}$. Using the fact \cite[Theorem~3.15]{Sas} that 
$$
\mu_{P_2,C_0}\geq \mu_{P_1,C_0}-d_{H^\infty}(P_1,P_2),
$$
it follows that $C_0$ stabilizes (in $H^\infty$) $P_2$ as well. But by the corona theorems for $H^\infty$ and for $\calA_+$  it follows that 
$C_0$ stabilizes $P_2$ in $\calA_+$ too.

Define $ Q_0:= ( \widetilde{K}_0
G_1)^{-1}\widetilde{K}_0 G_2$. By \cite[Proposition~4.4]{BalSas}, we know that  $\widetilde{K}_0 G_2$ is invertible in $\calA_+$. We have 
$$
G_2- G_1 Q_0 = G_2- G_1 (
\widetilde{K}_0 G_1)^{-1}\widetilde{K}_0G_2 = (I-G_1 ( \widetilde{K}_0
G_1)^{-1}\widetilde{K}_0)G_2 .
$$ 
Also
$$
I-\left[\begin{array}{cc} P_1 \\ 1 \end{array}\right] (1-C_0 P_1)^{-1} \left[\begin{array}{cc}-C_0 & 1 \end{array}\right]
 =
\left[\begin{array}{cc} 1 \\ C_0 \end{array}\right] (1-P_1C_0)^{-1}
 \left[\begin{array}{cc}1 & -P_1 \end{array}\right].
$$
that is, $ I-G_1 ( \widetilde{K}_0 G_1)^{-1}\widetilde{K}_0=K_0
(\widetilde{G}_1 K_0)^{-1}\widetilde{G}_1$.  Thus
$$
G_2- G_1 Q_0 =K_0 (\widetilde{G}_1 K_0)^{-1}\widetilde{G}_1G_2.
$$
Then we use $\|K_0\|\leq 1$ (since $K_0^* K_0=1$) to obtain
\begin{eqnarray*}
\|G_2 -G_1 Q_0\|_\infty &=& \|K_0 (\widetilde{G}_1 K_0)^{-1}\widetilde{G}_1   G_2\|_\infty\\
&\leq & \|K_0\|_\infty\|(\widetilde{G}_1 K_0)^{-1}\widetilde{G}_1   G_2\|_\infty\\
&\leq & 1 \cdot \|(\widetilde{G}_1 K_0)^{-1}\widetilde{G}_1   G_2\|_\infty\\
&\leq &  \|(\widetilde{G}_1 K_0)^{-1}\|_\infty\|\widetilde{G}_1   G_2\|_\infty.
\end{eqnarray*}
As for each $C$, $\mu_{P_1,C}\leq 1$, we have
$\mu_{\textrm{opt},\calA_+}(P_1)\leq 1$. So 
$$
d_{H^\infty}(P_1,P_2)<\mu_{\textrm{opt},\calA_+}(P_1)\leq 1,
$$ 
and we obtain $d_{H^\infty}(P_1,P_2)=\|\widetilde{G}_1 G_2\|_\infty$.

From \cite[Propositions~4.2,4.5]{BalSas}, $ \|(\widetilde{G}_1
K_0)^{-1}\|_\infty=1/\mu_{C_0,P_1}=1/\mu_{P_1,C_0}$.  So
$$
\|G_2 -G_1 Q_0\|_\infty\leq  \| (\widetilde{G}_1 K_0)^{-1}\|_\infty\|\widetilde{G}_1   G_2\|_\infty
\leq \frac{d_{H^\infty}(P_1,P_2)}{\mu_{P_1,C_0}}.
$$
Thus
$$
d_{\textrm{g}}(P_1,P_2) = \displaystyle \inf_{Q\in \inv \calA_+} \|G_1-G_2 Q\|_\infty \leq
\|G_1-G_2 Q_0\|\leq d_{H^\infty}(P_1,P_2)/\mu_{P_1,C_0}.
$$
But 
$$
d_{\textrm{g}}(P_1,P_2)\geq d_{\calA_+}(P_1,P_2).
$$
Hence 
$$
\mu_{P_1,C_0}\cdot d_{\calA_+}(P_1,P_2)\leq d_{H^\infty}(P_1,P_2).
$$
As this inequality holds for any $C_0$ that stabilizes $P_1$ (in $\calA_+$)  for which
there holds $d_{H^\infty}(P_1,P_2)<\mu_{P_1,C_0}$,  we can choose a sequence
$(C_{0,n})_{n\in \mN}$ such $\mu_{P_1,C_{0,n}} \rightarrow
\mu_{\textrm{opt},\calA_+}(P_1)$ as $n\rightarrow \infty$. Thus 
\begin{equation}
 \label{equiv_2_ineq}
\mu_{\textrm{opt},\calA_+}(P_1) \cdot d_{\calA_+}(P_1,P_2)
\leq d_{H^\infty}(P_1,P_2).
\end{equation}

Finally, from \eqref{equiv_1_ineq} and \eqref{equiv_2_ineq}, we have 
$$
\mu_{\textrm{opt},\calA_+}(P_1) \cdot d_{\calA_+}(P_1,P_2)
\leq d_{H^\infty}(P_1,P_2) \leq \frac{d_{\calA_+}(P_1,P_2)}{ \mu_{\textrm{opt},\calA_+}(P_1) }.
$$

%
%
%

\begin{remark}
We also mention that in this article we have only considered {\em
  single} input and {\em single} output control systems.  However, the
metrics $d_{\calA^+}$, $d_{H^\infty}$ can also be defined on plants
with multiple inputs and/or outputs as well; see \cite{BalSas} and
\cite{Sas}. One can ask if the induced topologies (on such matricial
stabilizable plants over $\calA_+$) are still equivalent.  We leave
this as an open problem. 

Our route of proving the equivalence in the case of single input
single output systems in this article is by appealing to the results
from \cite{Sas0}, which unfortunately are also available in only the
scalar case. Whether the matricial analogue of the result from
\cite{Sas0} holds is also open. If that result were available, then 
the same proof in this article, mutatis mutandis, would also 
yield the extension of the result in this article to the matricial case.
\end{remark}

%

\end{document}